\newtheorem{thm}{Theorem}
\numberwithin{defn}{section}
\numberwithin{thm}{section}
\numberwithin{Lemma}{section}
\numberwithin{Corollary}{section}
\numberwithin{Example}{section}
\numberwithin{subsection}{section}
\numberwithin{Remark}{section}
\numberwithin{equation}{section}
\numberwithin{ppn}{section}
\begin{document}
\title[ A family of  optimal eight-order iterative  ... ]
{An efficient  family of optimal eight-order  iterative methods for solving nonlinear equations} 
\author{Anuradha Singh and J. P. Jaiswal }
\date{}
\maketitle


\textbf{Abstract.} 
The prime objective of this paper is to  design  a new family of eighth-order iterative methods  by accelerating the order of convergence and efficiency index of well existing seventh-order iterative method of \cite{Soleymani1}  without using more function evaluations  for finding simple roots of nonlinear equations. The presented iterative family requires three function and one derivative evaluations and thus agrees with  the conjecture of Kung-Traub for the case $n = 4$  (i.e. optimal).  
We have also discussed  the derivative free version of the proposed scheme. 
Numerical comparisons have been carried out to demonstrate the efficiency and the performances of proposed method. 
\\\\
\textbf{Mathematics Subject Classification (2000).} 65H05, 41A25, 65D99.
\\\\
\textbf{Keywords and Phrases.} Nonlinear equation, simple root, order of convergence, optimal order, basin of attraction. 
\section{Introduction}
The Newton's iterative method is one of the prominent methods for finding roots of a nonlinear equation 
\begin{equation*}
f(x)=0. 
\end{equation*}
 It is well known that the order of convergence of the Newton's method is two. In real life problems, the evaluation of derivatives is difficult (sometimes not possible) or takes up a very long computational time, in that case it is hard to implement Newton method. To overcome this problem, Steffensen has provided an iterative method
\begin{equation}
x_{n+1}=x_n-\frac{f(x_n)^2}{f(x_n+f(x_n))-f(x_n)},
 \end{equation}
with two function evaluations and the same convergence rate as Newton-Raphson's method.
Solving nonlinear equations is one of the most important and gripping task in numerical analysis. The vast literature is available on the solution of nonlinear equations or system of nonlinear equations, one may refer \cite{Chun}-\cite{Traub1}. Very recently, in \cite {Petkovic} Petkovic et al. have provided detail discussion on multipoint methods for solving nonlinear equations. Such type of  schemes have drawn the attention of many researchers. In recent past, many researchers have focused to optimize the existing methods without evaluating additional functions and first derivative of functions. 

Recently, Soleymani et al. has established seventh-order method defined in \cite{Soleymani1} is given by 
\begin{eqnarray}\label{eqn:11}
y_n&=&x_n-  \frac{f(x_n)}{f'(x_n)}\nonumber\\
z_n&=&y_n- \frac{f(y_n)}{f[x_n,y_n]}.{G(t_n)}\nonumber\\
x_{n+1}&=&z_n- \frac{f(z_n)}{f[y_n,z_n]}.{H(t_n)},
\end{eqnarray}
where $t_n=\frac{f(y_n)}{f(x_n)}$ and  $G(0)=G'(0)=1$, $ \left|G''(0)\right|\leq +\infty$; $H(0)=1, H'(0)=0,H''(0)=2$,  $ \left|H^{(3)}(0)\right|\leq +\infty$. 

To compare efficieny of  different  iterative methods the efficiency index is defined in \cite{Gautschi,Traub2} and given by $p^{1/n}$, where $p$ is the order of convergence and  $n$ be the number of function evaluations  of the iterative  method.  Kung and Traub \cite{Kung} presented a hypothesis on the optimality of the iterative methods by giving $2^{n-1}$ as the optimal order. Thus the efficiency index of the method $(\ref{eqn:11})$ is $ 7^{1/4}\approx 1.626$ and clearly it is not optimal (because this method requires four function evaluations (three functions and one derivative) so for optimal its order of convergence should be $ 2^3=8$). The motive of this paper is to accelaerate the order of converegence of the method (1.2) from seven to eight without adding more evaluations, and thus it will agrees with Kung-Traub conjecture as well as give higher efficiency index.

The rest of the paper is organized as follows: in section 2, we propose a new optimal eight-order iterative method for finding simple roots of nonlinear equations . Particular case of  proposed iterative family have also been given. In section 3 an approach has been given to make our proposed method derivative free . In section 4, we employ some numerical examples to compare the performance of our new method  with some existing eight-order methods. 
 Finally, in the last  section  we furnished the concluding remarks and future work.
 
\section{ Improved Scheme and Convergence Analysis }
In, this section, the order of convergence of the method $(\ref{eqn:11})$ will be accelerated from seven to eight to make it optimal. The order of convergence of the method $(\ref{eqn:11})$ is seven by using four function [$f(x_n)$, $f'(x_n)$, $f(y_n)$, $f(z_n)$] evaluations, which is clearly not optimal.
To build an optimal eight-order method family of iterative methods without using more evaluations of the functions, we consider the following family
\begin{eqnarray}\label{eqn:21}
y_n&=&x_n- A(t_1). \frac{f(x_n)}{f'(x_n)}\nonumber\\
z_n&=&y_n- B(t_2) .\frac{f(y_n)}{f[x_n,y_n]}\nonumber\\
x_{n+1}&=&z_n- \{P(t_2)+Q(t_3)+R(t_4)\}.\frac{f(z_n)}{f[y_n,z_n]},
\end{eqnarray}
where  $t_1=\frac{f(x_n)}{f'(x_n)}$, $t_2=\frac{f(y_n)}{f(x_n)}$,  $t_3=\frac{f(z_n)}{f(y_n)}$  and $t_4=\frac{f(z_n)}{f(x_n)}$. The weight functions should be chosen such that the order arrives at optimal level eight. Theorem (2.1) gives the conditions on weight functions to reach at optimal level of convergence.
  \\                                                                
\begin{thm}
Let the function $f: D\subseteq \Re \longrightarrow \Re$  have sufficient number of continuous derivatives in a neighborhood $D$ of simple root  $\alpha$ of f. Then the method defined by $(\ref{eqn:21})$ has eighth-order convergence, when the weight functions $A(t_1)$,   $B(t_2)$,  $P(t_2)$,  $Q(t_3)$ and  $R(t_4)$, satisfy the following conditions:
\begin{eqnarray}\label{eqn:22a}
&& A(0)=1,\  A^{'}(0)=0,\ A^{''}(0)=0, \  \left|A^{(3)}(0)\right|\leq +\infty, \nonumber\\
&& B(0)=1,\  B^{'}(0)=1,\  \left|B^{(3)}(1)\right|\leq +\infty, \nonumber\\
&& R(0)=1-P(0)-Q(0), \nonumber\\
&& P'(0)=0,\ P''(0)=2, \ P^{(3)}(0)=6B''(0)-12, \  \left|P^{(4)}(0)\right|\leq +\infty, \nonumber\\
&& Q'(0)=0, \  \left|Q{''}(0)\right|\leq +\infty, \nonumber\\
&& R'(0)=2,\  \left|R{''}(0)\right|\leq +\infty.
\end{eqnarray}
\end{thm}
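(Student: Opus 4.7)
The standard route for verifying such an optimal-order claim is a Taylor-series error analysis. Let $e_n = x_n - \alpha$ and $c_k = f^{(k)}(\alpha)/(k!\,f'(\alpha))$. The plan is to expand $f(x_n)$ and $f'(x_n)$ around $\alpha$, form the auxiliary quantities $t_1, t_2, t_3, t_4$ as power series in $e_n$, Taylor-expand each weight function around $0$, and substitute these expansions into the three sub-steps, keeping enough orders so that the final error $e_{n+1} = x_{n+1} - \alpha$ is visibly of order $e_n^8$ under precisely the hypothesized conditions on $A, B, P, Q, R$.

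First I would handle the Newton-like predictor. Writing $t_1 = e_n - c_2 e_n^2 + \dots$ and using $A(0)=1,\,A'(0)=0,\,A''(0)=0$, one obtains $A(t_1) = 1 + \tfrac{1}{6}A^{(3)}(0)\, t_1^3 + O(t_1^4)$, so that $A(t_1)$ behaves like $1$ through order $e_n^2$. Hence the first step reduces to Newton's step modulo $O(e_n^4)$, giving $y_n - \alpha = c_2 e_n^2 + 2(c_3-c_2^2) e_n^3 + O(e_n^4)$. Next I would Taylor-expand $f(y_n)$, form $t_2 = f(y_n)/f(x_n)$, compute the divided difference $f[x_n,y_n] = f'(\alpha)\bigl(1 + c_2 e_n + \dots\bigr)$, and expand $B(t_2)$ using $B(0)=1,\,B'(0)=1$. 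Substituting these into $z_n = y_n - B(t_2) f(y_n)/f[x_n,y_n]$ and collecting terms should yield $z_n - \alpha$ of order $e_n^4$, with a leading coefficient depending on $c_2, c_3, c_4$ and on $B''(0)$.

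For the corrector step I would expand $f(z_n)$, form $t_3 = f(z_n)/f(y_n)$ and $t_4 = f(z_n)/f(x_n)$, and note that both are $O(e_n^2)$, while $t_2$ is $O(e_n)$. Taylor expanding $P(t_2) + Q(t_3) + R(t_4)$ and matching coefficients, the constant-term condition $R(0) = 1 - P(0) - Q(0)$ normalizes the weight function sum to $1$ at the root; the conditions $P'(0) = 0$, $P''(0) = 2$ and $R'(0) = 2$ are designed to annihilate the dominant lower-order terms in the correction $\{P+Q+R\}f(z_n)/f[y_n,z_n]$; and the coupling condition $P^{(3)}(0) = 6B''(0)-12$ is precisely what is needed to cancel the residual $e_n^7$ contribution generated by the interplay between the second-step error (which carries $B''(0)$) and the third-step weight expansion. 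The conditions $Q'(0)=0$ and the boundedness of the remaining higher derivatives ensure that no $O(e_n^k)$ term with $k\le 7$ survives, while the still-free parameters $A^{(3)}(0)$, $B''(0)$, $B^{(3)}(1)$, $P^{(4)}(0)$, $Q''(0)$, $R''(0)$ only enter the $e_n^8$ coefficient, establishing a genuine family.

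The main obstacle is purely computational: the bookkeeping of Taylor coefficients up to order seven in $e_n$ across three composed substitutions, divided differences, and quotients is delicate, and the derivation of the exact coupling $P^{(3)}(0)=6B''(0)-12$ requires tracking the $c_2^{\,3}$-type cross terms through all three stages. I would manage this either by organizing the expansion in powers of $e_n$ grade by grade, or by using a symbolic computation engine to confirm each coefficient, after which exhibiting the final error expression $e_{n+1} = \Psi(c_2,\dots,c_8;\text{weight derivatives})\, e_n^8 + O(e_n^9)$ completes the proof.
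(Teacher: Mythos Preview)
Your plan is correct and is precisely the route the paper takes: a step-by-step Taylor expansion in $e_n$, successively imposing the weight conditions to kill the lower-order terms and arriving at an explicit $e_n^8$ error. One small slip: since $z_n-\alpha=O(e_n^4)$, you have $t_4=f(z_n)/f(x_n)=O(e_n^3)$ (not $O(e_n^2)$), which is why only $R(0)$ and $R'(0)$ need to be prescribed.
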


\begin{proof}
Let $e_n=x_n-\alpha$ be the error in the $n^{th}$ iterate and $c_h=\frac{f^{(h)}(\alpha)}{h!}$, $h=1,2,3 . . .$. We provide the Taylor's series expansion of each term involved in $(\ref{eqn:21})$. By Taylor expansion around the simple root in the $n^{th}$ iteration, we have\\
\begin{equation}\label{eqn:23a}
\begin{split}
f(x_n)=c_1e_n+c_2e_n^2+. . . +O(e_n^{10}),       
\end{split}
\end{equation}
and
\begin{equation}\label{eqn:24a}
\begin{split}
f'(x_n)=c_1+2c_2e_n+. . . +O(e_n^{9})                     .  
\end{split}
\end{equation}
Furthermore, it can be easily find 
\begin{equation}\label{eqn:25}
\frac{f(x_n)}{f'(x_n)}=e_n-\frac{c_2e_n^2}{c_1}+. . . +O(e_n^9).
\end{equation}
By considering this relation and $A(0)=1$  we obtain
\begin{eqnarray}\label{eqn:26a}
y_n&=&\alpha+\left\{\frac{c_2}{c_1}-A'(0)\right\}e_n^2+\left\{-\frac{2c_2^2}{c_1^2}+ \frac{2(c_3+c_2A'(0))}{c_1}-\frac{A''(0)}{2}\right\}e_n^3 \nonumber\\
&&+. . . +O(e_n^{9}).
\end{eqnarray}
At this time, we should expand $f'(y_n)$ around the root by taking into consideration $(\ref{eqn:26a})$. Accordingly, we have
\begin{eqnarray}\label{eqn:27}
f(y_n)&=&(c_2-c_1A'(0))e_n^2+\left\{-\frac{2c_2^2}{c_1}+2(c_3+c_2A'(0))-\frac{1}{2}c_1A''(0)\right\}e_n^3 \nonumber\\
&&+. . . +O(e_n^{9}),
\end{eqnarray}

\begin{eqnarray}\label{eqn:27b}
\frac{f(y_n)}{f(x_n)}&=&\left\{\frac{c_2}{c_1}-A'(0)\right\}e_n+\left\{-\frac{3c_2^2}{c_1^2}+\frac{2c_3+3c_2A'(0)}{c_1}-\frac{A''(0)}{2}\right\}e_n^2 \nonumber\\
&&+. . . +O(e_n^{9}),
\end{eqnarray}

and
\begin{equation}\label{eqn:27a}
f[x_n,y_n]=c_1+c_2e_n+\left\{ \frac{c_2^2}{c_1}+c_3-c_2A'(0)\right\}e_n^2+. . . +O(e_n^{9}).
\end{equation}
Using the equations $(\ref{eqn:27})$, $(\ref{eqn:27a})$,   $(\ref{eqn:27b})$  and $A'(0)=0$, $B(0)=1$, $B'(0)=1$,  in the second step of $(\ref{eqn:21})$, we can find

\begin{eqnarray}\label{eqn:28}
&&z_n\nonumber\\
&&=\alpha+\frac{c_2(c_1(-2c_3+c_1A''(0))-c_2^2(-6+B''(0)))}{2c_1^3}e_n^4\nonumber\\
&&+\frac{1}{12c_1^4}(-3c_1(-4c_3+c_1A''(0))(-2c_3+c_1A''(0))+3c_1c_2^2(-4c_3+c_1A''(0))\nonumber\\
&& (-20+3B''(0))+2c_2^2c_2(-12c_4+c_1A^{(3)}(0))+c_2^4(54(-4+B''(0)-2B^{(3)}))e_n^5\nonumber\\
&&+ . . .+O(e_n^{9}).
\end{eqnarray}
 By virtue of the above equation, we have
\begin{eqnarray}\label{eqn:29}
&&f(z_n)\nonumber\\
&&=\frac{c_2(c_1(-2c_3+c_1A''(0))-c_2^2(-6+B''(0)))}{2c_1^2}e_n^4\nonumber\\
&&+\frac{1}{12c_1^3}(-3c_1(-4c_3+c_1A''(0))(-2c_3+c_1A''(0))+3c_1c_2^2(-4c_3+c_1A''(0))\nonumber\\
&&(-20+3B''(0)) +2c_2^2c_2(-12c_4+c_1A^{(3)}(0))+c_2^4(54(-4+B''(0)-2B^{(3)}))e_n^5\nonumber\\
&&+ . . .+O(e_n^{9}).
\end{eqnarray}
\begin{eqnarray}\label{eqn:29a}
f[y_n,z_n]=c_1+ \frac{c_2^2}{c_1}e_n^2+. . . +O(e_n^{9}).
\end{eqnarray}
\begin{eqnarray}\label{eqn:210}
\frac{f(z_n)}{f(y_n)}&=&\frac{c_1(-2c_3+c_1A''(0))-c_2^2(-6+B''(0))}{c_1}e_n^2\nonumber\\
&& +\frac{1}{6c_1^3}\{3c_1c_2(-4c_3(-6+B''(0))+c_1A''(0)(-5+B''(0)))\nonumber\\
&&+c_1^2(-12c_4+c_1A^{(3)}(0))+c_2^3(-72+21B''(0)-B^{(3)}(0))\}e_n^3 \nonumber\\
&&+ . . . +O(e_n^{9}).
\end{eqnarray}

\begin{eqnarray}\label{eqn:211}
\frac{f(z_n)}{f(x_n)}&=&\frac{c_2(c_1(-2c_3+c_1A''(0))-c_2^2(-6+B''(0)))}{2c_1^3}e_n^3\nonumber\\
&& +\frac{1}{12c_1^4}\{-3c_1^2(-4c_3+c_1A''(0)) (-2c_3+c_1A''(0))\nonumber\\
&&+3c_1c_2^2(-12c_3(-7+B''(0))+c_1A''(0)(-22+3B''(0)))\nonumber\\
&&+2c_1^2c_2(-12c_4+c_1A^{(3)}(0))+ c_2^4(60B''(0)-2(126+B^{(3)}(0))\}e_n^3 \nonumber\\
&&+ . . . +O(e_n^{9}).
\end{eqnarray}
Finally, using $(\ref{eqn:27b})$, $(\ref{eqn:210})$, $(\ref{eqn:211})$, $(\ref{eqn:29})$, $(\ref{eqn:29a})$  and $ R(0) = 1 - P(0) - Q(0)$ ,  $P'(0) = 0$,  $Q'(0) = 0$, $P''(0) = 2$, $R'(0) = 2$, $ A''(0) = 0$, $P^{(3)}(0) = 6B''(0) - 12$ in the last step of $(\ref{eqn:21})$, we get the final error expression which is given by
\begin{eqnarray}\label{eqn:212}
e_{n+1}&=&\frac{c_2}{48c_1^7}\{2c_1c_3+c_2^2(-6+B''(0))\}\nonumber\\ &&\{12c_1^2c_3^2Q''(0)+12c_1c_2^2c_3(8+(-6+B''(0))Q''(0))\nonumber\\
&&+4c_1^2c_2(-6c_4+c_1A^{(3)}(0))+ c_2^4(108Q''(0)+3B''(0) \nonumber\\
&& (8+(-12+B''(0))Q''(0))-8(9+B^{(3)}(0)+P^{(4)}(0) ) )\}e_n^8+O(e_n^{9}). \nonumber\\
\end{eqnarray}
Thus, theorem is proved.
\end{proof}

\textbf{Particular Case:}\\
Let
\begin{eqnarray*}
  A(t_1)&=&1+ \alpha\  t_1^3,  \nonumber\\ 
 B(t_2)&=&1+t_2+ \beta \  t_2^2, \nonumber \\
  C(t_2)&=&t_2^2+ 2(\beta-1) \  t_2^3, \nonumber\\ 
  D(t_3)&=&\gamma \ t_3^2, \nonumber \\
 E(t_4)&=&1+2\  t_4+\delta \  t_4^2, 
\end{eqnarray*}
where $\alpha,\ \beta,\ \gamma\ , \  \delta \in \mathbb{R}$, then the method becomes 

\begin{eqnarray}\label{eqn:213}
y_n&=&x_n- \{1+ \alpha\  t_1^3\} \frac{f(x_n)}{f'(x_n)},\nonumber\\
z_n&=&y_n- \{1+t_2+ \beta \  t_2^2\}\frac{f(y_n)}{f[x_n,y_n]},\nonumber\\
x_{n+1}&=&z_n- \{\{t_2^2+ 2(\beta-1) \  t_2^3\}+\{\gamma \ t_3^2\}+\{1+2\  t_4+\delta \  t_4^2\}\}.\frac{f(z_n)}{f[y_n,z_n]}.\nonumber\\
\end{eqnarray}
Clearly, this method is four-parametric where  $t_1=\frac{f(x_n)}{f'(x_n)}$, $t_2=\frac{f(y_n)}{f(x_n)}$,  $t_3=\frac{f(z_n)}{f(y_n)}$  and $t_4=\frac{f(z_n)}{f(x_n)}$.
Then its error expression becomes

\begin{eqnarray}\label{eqn:212}
e_{n+1}&=&\frac{c_2}{c_1^7}\{c_1c_3+c_2^2(-3+\beta)\}\nonumber\\ &&\{\alpha\ c_1^2c_2+(-3+2\beta+(-3+\beta^2)\gamma)c_2^4\nonumber\\
&&+2(2+(-3+\beta)\gamma)c_1c_2^2c_3+c_1^2(\gamma c_3^2-c_2c_4)
 \}e_n^8+O(e_n^{9}). \nonumber\\
\end{eqnarray}
\textit{Remark:} By taking different values of $\alpha$, $\beta$, $\gamma$  and  $\delta$ one can get a number of eighth-order iterative methods. Our class of three-step  method requires four evaluations (one derivative and three function) and has the order of convergence eight. Therefore our class is of optimal order and support the Kung-Traub conjecture for $n=4$. Clearly its efficiency index is  $8^{\frac{1}{4}} \approx 1.682$  which is more than efficiency index $7^{\frac{1}{4}} \approx 1.626$ of method (1.2).

\section{Derivative-free Scheme}
In the real world problems of science and engineering sometimes the derivative of the function is not easy to calculate 
or time consuming. To overcome this problem, in recent days many researches have focused to develop derivative-free methods to solve real world problems e.g. \cite{Soleymani4}, \cite{Soleymani5} and \cite{carlos} . 
In this section,  we give the derivative free version of the  method of previous section.

To do so, we replace $f'(x_n) \approx f[w_n,x_n]$  in the equation $(\ref{eqn:21})$  where $w_n=x_n+ f(x_n)$, then the method  becomes 
\begin{eqnarray}\label{eqn:310}
y_n&=&x_n- A(t_1). \frac{f(x_n)}{f[w_n,x_n]},\nonumber\\
z_n&=&y_n- B(t_2) .\frac{f(y_n)}{f[x_n,y_n]},\nonumber\\
x_{n+1}&=&z_n- \{P(t_2)+Q(t_3)+R(t_4)\}.\frac{f(z_n)}{f[y_n,z_n]},
\end{eqnarray}
and it can be seen that the error equation under the same conditions on weight functions as of theorem (2.1) is given by 
\begin{eqnarray}\label{eqn:311}
e_{n+1}&=&\frac{(1+ c_1)c_2^4(-2+ c_1(-2+Q''[0]))e_n^5}{2c_1^2}\nonumber\\ 
&&+O(e_n^{6}),
\end{eqnarray}
which show fifth-order of convergence. Again to maintain its order of convergence we consider, $w_n=x_n+f(x_n)^2$, then we see that the order of convergence is seven and its error expression is given by
\begin{eqnarray*}\label{eqn:312}
e_{n+1}&=&- \frac{\Bigl(  c_2^3(2  c_1^3 c_2+2c_1c_3+c_2^2(-6+B''[0]))\Bigr)e_n^7}{2c_1^3}\nonumber\\ 
&&+O(e_n^{8}),
\end{eqnarray*}
which also does not meet with our aim. But if we put $w_n=x_n+f(x_n)^3$ then its error equation  (under the same conditions on weight functions as of theorem (2.1)).
\begin{eqnarray}\label{eqn:312}
e_{n+1}&=&\frac{1}{48  c_1^7} c_2(2c_1c_3+c_2^2(-6+B''[0]))\Bigl(-24c_1^5c_2^2+12c_1^2(-2c_2c_4+c_3^2Q''[0]) \nonumber\\
&&+12c_1c_2^2c_3(8+(-6+B''[0])Q''[0])+4c_1^3c_2A^{(3)}[0]+c_2^4(108Q''[0] \nonumber\\
&&+3B''[0](8+(-12+B''[0])Q''[0])-8(9+B^{(3)}[0])+P^{(4)}[0])\Bigr)e_n^8\nonumber\\
&&+O(e_n^{9}).
\end{eqnarray}
Thus the method preserves its order of convergence for $w_n=x_n+f(x_n)^3$. In fact if we put  $w_n=x_n+ \alpha. (f(x_n))^n$, $n \geq 3$ ,where $\alpha \neq 0 \in \mathbb{R}$ in the scheme $(\ref{eqn:310})$ then it gives eighth-order of conevergence.

\section{Results and discussion}   
This section deals with the numerical comparisons of the proposed method $(\ref{eqn:213})$ with $\alpha=\gamma=0$, $\beta=3$,   $\delta=1$. In order to check the effectiveness  of the proposed iterative method we have considered seven test nonlinear functions which are taken from \cite{Babajee}. The test non-linear functions and their roots are listed in Table-1.  In recent days, higher-order methods are very important because numerical applications use high precision computations. Due to this reason all the computations reported have been performed in the programming package $MATHEMATICA\ 8$ using $1000$ digits floating point arithmetic using $''SetAccuraccy"$ command. The results of comparisons are given in Table 2 and Table 3. The computer characteristics during numerical calculations are Microsoft Windows 8 Intel(R) Core(TM)  i5-3210M CPU@ 2.50 GHz with 4.00 GB of RAM, 64-bit Operating System throughout this paper.
 Here we compare performances of our new eighth-order method $(\ref{eqn:213})$ ($OM8$) with the methods of  (34) ($M_{8,1}$), (35) ($ M_{8,2}$) of \cite{taher};  methods NM2 ($M_{8,3}$),  NM3 ($M_{8,4}$) of \cite{Sharma} and methods (11)  ($M_{8,5}$) (15)  ($M_{8,6}$) of \cite{Babajee}.
Table 2 represents the value of  $|f(x_n)|$ calculated for total  number of function evaluations 12 (TNFE-12) for each scheme.  It can be observed from Table 2  in almost cases our method $OM8$ is superior than other methods.
Table 3 exhibits the number of iteration and total number of function evaluation using the stopping criteria $|f(x_{n+1})| < \in$ where $\in = 10^{-50}$. From Table 3, we observe that $OM8$ takes at least equal or less number of iterations for different initial guesses.
\begin{table}[htb]
\caption{Functions and their roots.}
\small
  \begin{tabular}{| lll |} \hline
$f(x)$                                                        &$\alpha$             & \\ \hline 
$f_1(x)=10xe^{-x^2}-1$                      &$\alpha_1 \approx1.67963...$     & \\ 
$f_2(x)=x^5+x^4+4x^2-15$            &$\alpha_2 \approx 1.34742...$     &     \\
$f_3(x)=xe^{x^2}-(Sinx)^2+3Cosx+5$           &$\alpha_3 \approx -1.20764...$          &  \\ 
$f_4(x)=x^4+Sin(\frac{\pi}{x^2})-5$                &$\alpha_4=\sqrt{2}$         & \\ 
$f_5(x)=x^2e^x-Sinx$                      &$\alpha_5=0$              &  \\ 
$f_6(x)=(Sinx-\frac{\sqrt{2}}{2})^2(x+1)$                      &$\alpha_6=-1$       & \\ 
$f_7(x)=Sin3x+x Cosx $ & $\alpha_7 \approx 1.19776...$       &\\ \hline      
  \end{tabular}
  \label{tab:abbr}
\end{table}

\newpage
\begin{table}[!htbp]
\tiny
 \caption{Comparison of absolute value of the functions by different methods after third iteration  (TNFE-12).}
\begin{tabular}{|c |c| c|c | c|  c|  c |c |cc  |}\hline
$\left|f\right|$ & Guess &   $M_{8,1}$&         $M_{8,2}$&    $M_{8,3}$&       $M_{8,4}$&         $M_{8,5}$&   $M_{8,6}$&  $OM8$&\\ \hline      
$\left|f_1\right|$ & 1.72&  0.2e-617&      0.2e-617         & 0.2e-636 &         0.2e-602 &     0.1e-660& 0.2e-654& 0.4e-688&\\
                            &1.5&     0.1e-357&        0.7e-358        &0.7e-365&          0.3e-346&      0.2e-375& 0.8e-361& 0.2e-448&\\
                            &1.7&     0.5e-796&       0.5e-796      &0.1e-794&            0.2e-762&       0.4e-828& 0.5e-809& 0.4e-866&\\  
                           & 1.1&     0.8e-259&      0.7e-257      &0.3e-179&             0.6e-175&       0.1e-205& 0.2e-204& 0.6e-259&\\ \hline
                           
$\left|f_2\right|$&  1.1& NC&                   0.6e-52        &0.1e-177&           0.9e-116&           0.7e-165& 0.5e-299& 0.3e-127&\\
                         &   1.8&   0.3e-148&       0.2e-149       &0.5e-194&           0.2e-175&           0.1e-195& 0.4e-187& 0.1e-225&\\ 
                         &  1.5&    0.6e-347&       0.6e-347      &0.2e-377&           0.1e-348  &          0.9e-437& 0.1e-390& 0.4e-436&\\
                         &  2.0&     0.5e-97&        0.6e-99        &0.3e-148&           0.1e-133&          0.3e-134& 0.4e-132& 0.1e-150&\\ \hline

$\left|f_3\right|$    &-1.1&    0.5e-234&      0.1e-235&         0.4e-337&          0.8e-285&       0.1e-325& 0.6e-433& 0.1e-301&\\
                              &-1.5&    0.5e-124&       0.7e-125&           0.2e-182&        0.2e-161&      0.2e-253&0.2e-205&0.2e-254&\\
                             &-1.0&             div.&         0.2e-45&            0.1e-173&        0.2e-107&      0.3e-158& 0.3e-254&0.9e-116&\\
                            &-1.3&   0.1e-383&      0.1e-383&             0.6e-404&         0.2e-370&      0.2e-411& 0.3e-460&0.1e-468&\\ \hline
%
$\left|f_4\right|$ & 1.0&    0.9e-255&       0.1e-251&      0.2e-226&        0.3e-223&        0.2e-199& 0.5e-227&0.2e-262&\\
                             &1.6&   0.8e-338&       0.7e-338&       0.9e-356&        0.2e-329&         0.3e-370& 0.5e-430&0.1e-441&\\
                            & 1.5&  0.2e-508&        0.2e-508&       0.1e-519&         0.1e-487&       0.9e-526& 0.4e-560&0.3e-532&\\
                            & 2.1& 0.1e-105&         0.2e-107&       0.1e-146&       0.9e-134&        0.1e-144&  0.1e-143& 0.3e-159&\\ \hline

$\left|f_5\right|$ &0.1&   0.1e-272&     0.3e-273&       0.1e-340&       0.4e-303&       0.6e-349& 0.1e-338&0.1e-364&\\
                             &0.5&  0.5e-264&     0.8e-265&      0.1e-346&        0.3e-301&       0.2e-342& 0.7e-382&0.5e-339&\\
                            & -0.1& 0.4e-475&    0.1e-475&       0.2e-470&       0.1e-455&       0.3e-441&  0.2e-436&0.4e-485&\\
                            & -0.5& 0.1e-270&    0.3e-270&      0.8e-237&        0.2e-233&      0.1e-235&  0.3e-233&0.2e-277&\\ \hline

$\left|f_6\right|$ &-0.8&   0.3e-158&      0.1e-162&       0.4e-258&         0.9e-214&       0.4e-248&  0.6e-288&0.3e-254&\\
                             &-1.2&  0.3e-422&       0.4e-423&       0.7e-398&        0.9e-387&       0.3e-385&  0.1e-380&0.2e-435&\\
                            &-0.9&   0.1e-425&        0.1e-425&     0.1e-456&        0.1e-424&        0.4e-500&  0.4e-457&0.6e-526&\\
                            &-1.5&  0.1e-324&         0.8e-324&      0.7e-262&      0.1e-258&         0.7e-273&  0.1e-272& 0.3e-336&\\ \hline

$\left|f_7\right|$ &  1.0&   0.1e-446&      0.2e-448&        0.1e-374&        0.1e-371&       0.2e-378& 0.7e-374&0.2e-505&\\
                             & 0.8&    0.6e-81&                  NC&       0.3e-130&        0.4e-114&       0.7e-159& 0.1e-116&0.6e-150&\\
                            & 1.8&        NC&                      NC&          0.2e-38&          0.3e-20&                 NC&           NC&0.6e-29&\\
                            & 0.3&   0.1e-226&        0.3e-235&       0.8e-244&        0.7e-215&        0.7e-277& 0.1e-278& 0.2e-416&\\ \hline

 \end{tabular}
 \label{tab:abbr}
\end{table}
Here div.= Divergent , I= Indeterminate , NC= Not convergent.

\newpage
\begin{table}[!htbp]
\tiny
 \caption{Comparison of number of iterations and total number of function evaluations (TNFE).}
\begin{tabular}{|c |c| c|c | c|  c|  c |c |cc  |}\hline
$f$ & Guess &   $M_{8,1}$&        $M_{8,2}$&            $M_{8,3}$&    $M_{8,4}$&   $M_{8,5}$&  $M_{8,6}$&  $OM8$&\\ \hline      
$f_1$ & 1.72&  2(8)&                2(8)       &             2(8)&                  2(8)&             2(8)&   2(8)&    2(8)&\\
                            &1.5&    3(12)&                    3(12)&            3(12)&                3(12)&          3(12)&  3(12)&  2(8)&\\
                            &1.7&    2(8)&                       2(8)&              2(8)&                  2(8)&          2(8)& 2(8)& 2(8)&\\  
                           & 1.1&     3(12)&                    3(12)&            3(12)&                3(12)&        3(12)&  3(12)&  2(8)&\\ \hline
                           
$f_2$&  1.1& NC&                3(12)        & 3(12)&          3(12)&          3(12)& 3(12)&  3(12)&\\
                         &   1.8&   3(12)&        3(12)&           3(12)&           3(12)&           3(12)& 3(12)&  3(12)&\\ 
                         &  1.5&    3(12)&        3(12)&           3(12)&            3(12)&           2(8)&  3(12)&  2(8)&\\
                         &  2.0&    3(12)&        3(12)&           3(12)&           3(12)&           3(12)& 3(12)&  3(12)&\\ \hline

$f_3$    &-1.1&     3(12)&                    3(12)&            3(12)&                3(12)&          3(12)&  2(8)&  2(8)&\\
                              &-1.5&      3(12)&        3(12)&           3(12)&           3(12)&           3(12)& 3(12)&  3(12)&\\
                             &-1.0&       div.&                     4(16)&            3(12)&                  3(12)&         3(12)&   3(12)&  3(12)&\\
                            &-1.3&     3(12)&                     3(12)&            3(12)&        3(12)&       2(8)&      2(8)&                  2(8)&\\ \hline
%
$f_4$ & 1.0&         3(12)&        3(12)&           3(12)&           3(12)&           3(12)&                 3(12)&        3(12)&\\
                             &1.6&         3(12)&       3(12)&            3(12)&           3(12)&          3(12)&  2(8)&  2(8)&\\
                            & 1.5&         2(8)&                       2(8)&              2(8)&                  2(8)&          2(8)& 2(8)& 2(8)&\\
                            & 2.1&        3(12)&        3(12)&           3(12)&           3(12)&           3(12)& 3(12)&  3(12)&\\ \hline

$f_5$ &0.1&    3(12)&        3(12)&           3(12)&           3(12)&           3(12)& 3(12)&  3(12)&\\
                             &0.5&   3(12)&        3(12)&           3(12)&           3(12)&           3(12)& 3(12)&  3(12)&\\
                            & -0.1&   2(8)&                2(8)       &             2(8)&                  2(8)&             2(8)&   2(8)&    2(8)&\\
                            & -0.5&   3(12)&        3(12)&           3(12)&           3(12)&           3(12)& 3(12)&  3(12)&\\ \hline

$f_6$ &-0.8&        3(12)&            3(12)&           3(12)&           3(12)&           3(12)&                 3(12)&  3(12)&\\
                             &-1.2&      2(8)&               2(8)&            3(12)&              3(12)&         3(12)&  3(12)&2(8)&\\
                            &-0.9&  2(8)&       2(8)&    2(8)&       3(12)&       2(8)&  2(8)&2(8)&\\
                            &-1.5& 3(12)&        3(12)&           3(12)&           3(12)&           3(12)& 3(12)&  3(12)&\\ \hline

$f_7$ &  1.0& 2(8)&      2(8)&        3(12)&        3(12)&       3(12)& 3(12)&2(8)&\\
                             & 0.8&  3(12)&     NC&      3(12)&       3(12)&      3(12)& 3(12)&3(12)&\\
                            & 1.8&   NC&        NC&         4(16)&          4(16)&                 NC&           NC&4(16)&\\
                           & 0.3&   3(12)&  3(12)&           3(12)&           3(12)&           3(12)& 3(12)&  3(12)&\\ \hline                     
 \end{tabular}
 \label{tab:abbr}
\end{table}

\newpage
\section{Conclusion and Future work}
In this article, we have contributed a new efficient family of eight-order iterative methods to find simple roots of a nonlinear equation    by accelerating the order of convergence and efficiency index of well existing seventh-order iterative method of \cite{Soleymani1}  without using more function evaluations  for finding simple roots of nonlinear equations. Our family requires three function and one derivative evaluations and thus agrees with  the conjecture of Kung-Traub for the case $n = 4$  (i.e. optimal).  An approach to make proposed method free from derivative has also discussed here. Numerical  comparisons  also witness the efficiency of new method. Therefore, we can conclude that the new family is  efficient and give at least equal or better performance over some other eight-order methods. Using the technique of \cite{carlos} other existing methods having derivatives can be made free from derivatives.


\textsc{Anuradha Singh\\
Department of Mathematics, \\
Maulana Azad National Institute of Technology,\\
Bhopal, M.P., India-462051}.\\
E-mail: {singh.anu3366@gmail.com; singhanuradha87@gmail.com}.\\\\
\textsc{Jai Prakash Jaiswal\\
Department of Mathematics, \\
Maulana Azad National Institute of Technology,\\
Bhopal, M.P., India-462051}.\\
E-mail: {asstprofjpmanit@gmail.com; jaiprakashjaiswal@manit.ac.in}.
\end{document}